\let\OLDthebibliography\thebibliography
\renewcommand\thebibliography[1]{
  \OLDthebibliography{#1}
  \setlength{\parskip}{3pt}
  \setlength{\itemsep}{0pt plus 0.3ex}
}
\def\numberlikeadb{\global\def\theequation{\thesection.\arabic{equation}}}
\newtheorem{theorem}{Theorem}[section]
\begin{document}

\title{A note on the distribution of the product of zero mean correlated normal random variables}
\author{Robert E. Gaunt\footnote{School of Mathematics, The University of Manchester, Manchester M13 9PL, UK}}

\date{} 
\maketitle

\vspace{-10mm}

\begin{abstract}The problem of finding an explicit formula for the probability density function of two zero mean correlated normal random variables dates back to 1936.  Perhaps surprisingly, this problem was not resolved until 2016.  This is all the more surprising given that a very simple proof is available, which is the subject of this note; we identify the  product of two zero mean correlated normal random variables as a variance-gamma random variable, from which an explicit formula for probability density function is immediate.  


\end{abstract}

\noindent{{\bf{Keywords:}}} Product of correlated normal random variables; probability density function; variance-gamma distribution

\noindent{{{\bf{AMS 2010 Subject Classification:}}} Primary 60E05; 62E15

\section{Introduction}

Let $(X,Y)$ be a bivariate normal random vector with zero mean vector, variances $(\sigma_X^2,\sigma_Y^2)$ and correlation coefficient $\rho$.  The exact distribution of the product $Z=XY$ has been studied since 1936 \cite{craig}; see \cite{np16} for an overview of some of the contributions.  The  distribution of $Z$ has been used in numerous applications since 1936, with some recent examples being: product confidence limits for indirect effects \cite{mac2}; statistics of Lagrangian power in two-dimensional turbulence \cite{bandi}; statistical mediation analysis \cite{mac1}.  However, despite this interest, the problem of finding an exact formula for the probability density function (PDF) of $Z$ remained open for many years.

Recently in 2016, some 80 years after the problem was first studied, an approach based on characteristic functions was used by \cite{np16} to obtain an explicit formula for the PDF of $Z$.  As a by-product, the exact distribution was obtained for the mean $\overline{Z}=\frac{1}{n}(Z_1+Z_2+\cdots+Z_n)$, where $Z_1, Z_2,\ldots, Z_n$ are independent and identical copies of $Z$.  This distribution is of interest itself; see, for example \cite{ware} for an application from electrical engineering.  Since the work of \cite{np16}, an exact formula for the PDF of a product of  correlated normal random variables with non-zero means was obtained by \cite{cui}.  This formula takes a complicated form, involving a double sum of modified Bessel functions of the second kind.

Shortly before the work of \cite{np16}, it was shown that $Z$ had a variance-gamma distribution (see Section 2 for further details regarding this distribution) in the thesis \cite{gaunt thesis} and the paper \cite{gaunt vg} (see part (iii) of Proposition 1.2).  A formula for the PDF is then immediate, and whilst not noted in those works, a formula for the PDF of $\overline{Z}$ can then be obtained from standard properties of the variance-gamma distribution.  In this note, we fill in this gap to provide a simple alternative proof of the main results of \cite{np16}.  Given the simplicity of our approach, it is surprising that such a natural problem in probability and statistics remained open for so many years.  Moreover, an advantage of our approach is that the distributions of $Z$ and $\overline{Z}$ are identified as being from the variance-gamma class, for which a well established distributional theory exists; see Chapter 4 of the book \cite{kkp01}.

The rest of this note is organised as follows.  In Section 2, we introduce the variance-gamma distribution and record some basic properties that will be useful in the sequel.  In Section 3, we provide an alternative proof of the main results of \cite{np16} by noting that $Z$ and $\overline{Z}$ are variance-gamma distributed.    

\section{The variance-gamma distribution}

The variance-gamma distribution with parameters $r > 0$, $\theta \in \mathbb{R}$, $\sigma >0$, $\mu \in \mathbb{R}$ has PDF
\begin{equation}\label{vgdef}f(x) = \frac{1}{\sigma\sqrt{\pi} \Gamma(\frac{r}{2})} \mathrm{e}^{\frac{\theta}{\sigma^2} (x-\mu)} \bigg(\frac{|x-\mu|}{2\sqrt{\theta^2 +  \sigma^2}}\bigg)^{\frac{r-1}{2}} K_{\frac{r-1}{2}}\bigg(\frac{\sqrt{\theta^2 + \sigma^2}}{\sigma^2} |x-\mu| \bigg), \quad x\in\mathbb{R}, 
\end{equation}
where the modified Bessel function of the second kind is given, for $x>0$, by $K_\nu(x)=\int_0^\infty \mathrm{e}^{-x\cosh(t)}\cosh(\nu t)\,\mathrm{d}t$.  If a random variable $W$ has density (\ref{vgdef}) then we write $W\sim \mathrm{VG}(r,\theta,\sigma,\mu)$.  This parametrisation was given in \cite{gaunt vg}.  It is similar to the parametrisation given by \cite{finlay} and alternative parametrisations are given by \cite{eberlein}, and the book \cite{kkp01} in which the name generalized Laplace distribution is used.  The distribution has semi-heavy tails, which are useful for modelling financial data \cite{madan}, and an overview of this and other applications are given in \cite{kkp01}.

We now review some basic properties of the variance-gamma distribution that will be needed in Section 3.  We stress that, with a standard handbook on definite integrals at hand (such as \cite{gradshetyn}), all that is required to establish these properties is a working knowledge of a first course in undergraduate probability.  Throughout, we shall set the location parameter $\mu$ equal to 0.  Firstly, we note a fundamental representation in terms of independent normal and gamma random variables (\cite{kkp01}, Proposition 4.1.2).  Let $S\sim\Gamma(\frac{r}{2},\frac{1}{2})$ (with PDF $\frac{1}{2^{r/2}\Gamma(r/2)}x^{r/2-1}\mathrm{e}^{-x/2}$, $x>0$) and $T\sim N(0,1)$ be independent.  Then 
\begin{equation}\label{lkl}\theta S+\sigma \sqrt{S}T\sim\mathrm{VG}(r,\theta,\sigma,0).
\end{equation} 
We will need the following special case of (\ref{lkl}). Let $U$ and $V$ be independent $N(0,1)$ random variables.  Then
\begin{equation}\label{vuelta}\theta U^2+\sigma UV\sim \mathrm{VG}(1,\theta,\sigma,0).
\end{equation}
This follows from (\ref{lkl}) due to the standard facts that $U^2\sim\Gamma(\frac{1}{2},\frac{1}{2})$ and $|U|V\stackrel{\mathcal{D}}{=}UV$.  Finally, we note that the class of variance-gamma distributions is closed under convolution (provided the random variables have common values of $\theta$ and $\sigma$) \cite{bibby}  and scaling by a constant. Let $W_1\sim \mathrm{VG}(r_1,\theta,\sigma,0)$ and $W_2\sim \mathrm{VG}(r_2,\theta,\sigma,0)$ be independent.  Then 
\begin{equation}\label{p1}W_1+W_2\sim \mathrm{VG}(r_1+r_2,\theta,\sigma,0). 
\end{equation} 
It is also clear from (\ref{lkl}) that 
\begin{equation}\label{p2}aW_1\sim \mathrm{VG}(r_1,a\theta,a\sigma,0). 
\end{equation}     

\section{Main result and proof}

Here, we provide an alternative proof of the main results of \cite{np16} by noting that $Z$ and $\overline{Z}$ are variance-gamma distributed.

\begin{theorem}Let $(X,Y)$ denote a bivariate normal random vector with zero means, variances $(\sigma_X^2,\sigma_Y^2)$ and correlation coefficient $\rho$.  

(i) Let $Z=XY$. Then $Z\sim \mathrm{VG}(1,\rho\sigma_X\sigma_Y, \sigma_X\sigma_Y\sqrt{1-\rho^2},0)$.

(ii) Let $Z_1, Z_2,\ldots, Z_n$ be independent random variables with the same distribution as $Z$. Let $\overline{Z}$ denote their sample mean.  Then $\overline{Z}\sim\mathrm{VG}(n,\frac{1}{n}\rho\sigma_X\sigma_Y, \frac{1}{n}\sigma_X\sigma_Y\sqrt{1-\rho^2},0)$.

(iii) Consequently, the PDFs of $Z$ and $\overline{Z}$ are given by
\begin{equation*}f_Z(x)=\frac{1}{\pi\sigma_X\sigma_Y\sqrt{1-\rho^2}}\exp\bigg(\frac{\rho x}{\sigma_X\sigma_Y(1-\rho^2)}\bigg)K_0\bigg(\frac{|x|}{\sigma_X\sigma_Y(1-\rho^2)}\bigg), \quad x\in\mathbb{R},
\end{equation*}
and, for $n\geq2$,
\begin{equation*}f_{\overline{Z}}(x)=\frac{n^{(n+1)/2}2^{(1-n)/2}|x|^{(n-1)/2}}{(\sigma_X\sigma_Y)^{(n+1)/2}\sqrt{\pi(1-\rho^2)}\Gamma\big(\frac{n}{2}\big)}\exp\bigg(\frac{\rho n x}{\sigma_X\sigma_Y(1-\rho^2)} \bigg)K_{\frac{n-1}{2}}\bigg(\frac{n |x|}{\sigma_X\sigma_Y(1-\rho^2)}\bigg), 
\end{equation*}
$x\in\mathbb{R}$, where $K_\nu(\cdot)$ is the modified Bessel function of the second kind of order $\nu$.

\end{theorem}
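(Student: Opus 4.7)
The plan is to reduce the entire theorem to the three properties \eqref{vuelta}, \eqref{p1}, \eqref{p2} of the variance-gamma class recorded in Section~2, using the standard orthogonal decomposition of a bivariate normal. Once the distributional claims (i) and (ii) are in place, part (iii) is a direct substitution into \eqref{vgdef}.

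For part (i), I would represent the pair $(X,Y)$ as $X=\sigma_X U$ and $Y=\sigma_Y(\rho U+\sqrt{1-\rho^2}\,V)$, where $U,V$ are independent $N(0,1)$. This gives
$$Z=XY=\sigma_X\sigma_Y\bigl(\rho U^2+\sqrt{1-\rho^2}\,UV\bigr).$$
Applying \eqref{vuelta} with $\theta=\rho$ and $\sigma=\sqrt{1-\rho^2}$ shows that the bracket is $\mathrm{VG}(1,\rho,\sqrt{1-\rho^2},0)$; then the scaling rule \eqref{p2} with $a=\sigma_X\sigma_Y$ yields the stated distribution of $Z$. For part (ii), I would first sum the $n$ i.i.d.\ copies using the convolution rule \eqref{p1} (the shared values of $\theta$ and $\sigma$ make it applicable), obtaining $Z_1+\cdots+Z_n\sim\mathrm{VG}(n,\rho\sigma_X\sigma_Y,\sigma_X\sigma_Y\sqrt{1-\rho^2},0)$, and then apply \eqref{p2} with $a=1/n$ to read off the distribution of $\overline{Z}$.

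Part (iii) is just bookkeeping: plug $(r,\theta,\sigma,\mu)$ from (i) and (ii) into \eqref{vgdef} and simplify using the identity $\theta^2+\sigma^2=\rho^2\sigma_X^2\sigma_Y^2+(1-\rho^2)\sigma_X^2\sigma_Y^2=\sigma_X^2\sigma_Y^2$, together with $\Gamma(1/2)=\sqrt{\pi}$ and the vanishing exponent $(r-1)/2=0$ in the $n=1$ case. No delicate estimate or asymptotic analysis is required.

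There is essentially no obstacle in the technical sense; the only non-mechanical move is the initial one of writing $(X,Y)$ in terms of the independent standard normals $U,V$ and recognising $Z$ as a scalar multiple of the exact right-hand side of \eqref{vuelta}. This pattern match is what makes the problem collapse to a one-line identification and, as the authors emphasise, explains why it is striking that the PDF remained unwritten for eighty years.
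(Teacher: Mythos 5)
Your proposal is correct and follows essentially the same route as the paper: an orthogonal decomposition of $(X,Y)$ into independent standard normals so that $Z$ matches the right-hand side of (\ref{vuelta}), followed by the convolution and scaling rules (\ref{p1}) and (\ref{p2}) and substitution into (\ref{vgdef}). The only cosmetic difference is that you build $(X,Y)$ from $U,V$ and carry $\sigma_X\sigma_Y$ through via (\ref{p2}), whereas the paper first reduces to $\sigma_X=\sigma_Y=1$ and defines $W=(Y-\rho X)/\sqrt{1-\rho^2}$; these are the same decomposition read in opposite directions.
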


\begin{proof}We consider the case $\sigma_X=\sigma_Y=1$; the general case follows from (\ref{p2}).

(i) Define the random  variable $W$ by $W=\frac{1}{\sqrt{1-\rho^2}}(Y-\rho X)$.
It is straightforward to show that $W$ and $X$ are jointly standard normally distributed with correlation $0$.  Thus, $Z$ can be expressed in terms of independent $N(0,1)$ random variables $X$ and $W$ as follows:
\[Z=XY=X(\sqrt{1-\rho^2} W+\rho X)=\sqrt{1-\rho^2}XW+\rho X^2.\]
Hence, from (\ref{vuelta}), we have that $Z\sim \mathrm{VG}(1,\rho, \sqrt{1-\rho^2},0)$.

(ii) Use part (i) and properties (\ref{p1}) and (\ref{p2}) of variance-gamma random variables.

(iii) Combine (\ref{vgdef}) with parts (i) and (ii) and use the formula $\Gamma(\frac{1}{2})=\sqrt{\pi}$. 
\end{proof}

\subsection*{Acknowledgements}
The author is supported by a Dame Kathleen Ollerenshaw Research Fellowship.  

\footnotesize

\end{document}